\newcommand{\Real}{\mathbb{R}}
\newtheorem{theorem}{Theorem}
\newtheorem {lemma}{Lemma}
\title{A quantitative discounted central limit theorem using the Fourier metric}
\author{Guy Katriel\\ Department of Mathematics, ORT Braude College,\\ Karmiel, Israel\\}
\date{}
\begin{document}

\maketitle

\begin{abstract}
The discounted central limit theorem concerns the convergence of an infinite discounted sum of i.i.d. random variables
to normality as the discount factor approaches $1$. We show that, using the Fourier metric on probability distributions,
one can obtain the discounted central limit theorem, as well as a quantitative version of it, in a simple and natural way, and under weak 
assumptions.
\end{abstract}

\section{Introduction}

Let $X_n$ ($n\geq 0$) be a sequence of i.i.d. real-valued random variables, with 
\begin{equation}\label{ba}\mu=E(X_0),\;\;\; \sigma^2=Var(X_0)<\infty.\end{equation}
For $a\in [0,1)$, we define the random variable
\begin{equation}\label{rs}S_a=\sum_{n=0}^\infty a^n X_n.\end{equation}
Standard results ensure that (\ref{rs}) converges almost surely (see e.g. \cite{chung}, Sec. 5.3).
$S_a$ can be understood as the present value of a future stream of i.i.d. payments, where $a$ is the discount factor.

Gerber \cite{gerber} proved, assuming that $X_n$ have finite third moments, that as $a\rightarrow 1$, the distribution of $S_a$ 
approaches a normal distribution: normalizing $S_a$ by setting
\begin{equation}\label{nz}\hat{S}_a=\frac{S_a-E(S_a)}{\sqrt{Var(S_a)}}=\frac{\sqrt{1-a^2}}{\sigma}\cdot \left(S_a-\frac{\mu}{1-a}\right),\end{equation}
we have
\begin{equation}\label{clt}\hat{S}_a\;\;\substack{D\\ \rightarrow} \;\;N(0,1)\;\;\;\;{\mbox{as }}\;a\rightarrow 1-,\end{equation}
that is, defining the corresponding cumulative distribution functions
\begin{equation}
\label{dfa}F_a(x)=P\left(\hat{S}_a\leq x \right),
\end{equation}
\begin{equation}
\label{dn}\Phi (x)= P\left(N(0,1)\leq x \right)=\frac{1}{\sqrt{2\pi}} \int_{-\infty}^x e^{-\frac{u^2}{2}} du,
\end{equation}
we have, for all $x\in \Real$,
\begin{equation}\label{cid}\lim_{a\rightarrow 1-}F_a(x)=\Phi(x).\end{equation}
This is the discounted central limit theorem.
Gerber also gave a quantitative bound of Berry-Eseen type for this convergence: 
\begin{equation}\label{gerber}\sup_{x\in\Real} |F_a(x)-\Phi(x)|\leq C\cdot \frac{E(|X_0-\mu|^3)}{\sigma^3}\cdot (1-a)^{\frac{1}{2}},\end{equation}
and one can take $C=5.4$ (we note that the formulation given in \cite{gerber} is 
slightly different, but equivalent, because of the different normalization taken there).
Subsequent works extended and refined the results of \cite{gerber} in several directions (see e.g. \cite{embrechts,horvath, saulis,whitt}).

Here we will prove a discounted central limit theorem without any assumption on moments higher than $2$, and also
give a new and different quantitative bound for the convergence, in the case that some moment of order $s=2+\epsilon$ ($\epsilon>0$) exists.
This bound will be given in terms of a Fourier-based metric, which will be seen to provide a simple and natural approach to the study of 
discounted sums.
A key observation underlying our proofs is that the
distibution $F_a$ can be realized as a fixed point of a mapping on a space of distributions, which is a contraction with respect to
this metric. 
Fourier-based metrics were introduced in connection with study of the Boltzmann equation \cite{gabetta}, and have since found many applications (see \cite{carrillo} for a review). 
In particular in \cite{goudon} these metrics have been used to obtain Berry-Esseen type inequalities.

We recall the definition of the Fourier-based metrics.  
For any real $s>0$, we denote by ${\cal{P}}^s$ the set of all distribution functions $G$ on $\Real$ with finite moment of order $s$, and with expectation $0$ and variance $1$:
\begin{equation}\label{fs}\int_{-\infty}^\infty |x|^s dG(x)<\infty,\end{equation}
\begin{equation}
\label{mc}\int_{-\infty}^\infty x dG(x)=0,\;\;\; \int_{-\infty}^\infty x^2 dG(x)=1.
\end{equation}
To each distribution function $G$  we associate its characteristic function
$$C_G(\xi)=\int_{-\infty}^\infty e^{-i\xi x} dG(x).$$  
If $s>0$, and $G,H$ are probability distributions, their Fourier distance of type $s$ is defined by
\begin{equation}
\label{fd}d_s(G,H)=\sup_{\xi\neq 0} \frac{|C_G(\xi)-C_H(\xi)|}{|\xi|^s}.
\end{equation}
If $s\in [2,3]$ and $G,H\in {\cal{P}}^s$, then $d_s(G,H)<\infty$ (see \cite{carrillo}, Proposition 2.6).

We prove that
\begin{theorem}\label{main2} If (\ref{ba}) holds, then
\begin{equation}\label{mi} \lim_{a\rightarrow 1-}d_2(F_a,\Phi)=0.\end{equation}
\end{theorem}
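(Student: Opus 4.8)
\emph{Proof strategy.}
The plan is to exploit the self-similarity of $S_a$. Since $S_a$ equals $X_0+a\,S_a'$ in distribution, with $S_a'$ a copy of $S_a$ independent of $X_0$, the normalized variable satisfies, in distribution, $\hat S_a=\sqrt{1-a^2}\,W_0+a\,\hat S_a'$, where $W_n=(X_n-\mu)/\sigma$ are i.i.d.\ with common distribution $G\in\mathcal{P}^2$ and $\hat S_a'$ is a copy of $\hat S_a$ independent of $W_0$. Equivalently, $C_{F_a}$ is the fixed point of the map $\phi\mapsto C_G(\sqrt{1-a^2}\,\xi)\,\phi(a\xi)$; iterating this identity (or simply writing $\hat S_a=\sqrt{1-a^2}\sum_{n\ge0}a^nW_n$) yields the product representation
\[
C_{F_a}(\xi)=\prod_{k=0}^{\infty}C_G(\beta_k\xi),\qquad \beta_k:=\sqrt{1-a^2}\,a^k,\qquad \sum_{k=0}^\infty\beta_k^2=1 .
\]
Since $\sum_k\beta_k^2=1$, the Gaussian admits the parallel representation $C_\Phi(\xi)=e^{-\xi^2/2}=\prod_{k=0}^\infty e^{-\beta_k^2\xi^2/2}$. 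As all factors have modulus at most $1$, the telescoping estimate $\bigl|\prod u_k-\prod v_k\bigr|\le\sum|u_k-v_k|$ gives
\[
|C_{F_a}(\xi)-C_\Phi(\xi)|\le\sum_{k=0}^\infty\bigl|C_G(\beta_k\xi)-e^{-\beta_k^2\xi^2/2}\bigr|=\xi^2\sum_{k=0}^\infty\beta_k^2\,g(\beta_k\xi),
\]
where $g(\eta):=\eta^{-2}\,|C_G(\eta)-e^{-\eta^2/2}|$. Finiteness of the second moment forces $C_G(\eta)=1-\tfrac12\eta^2+o(\eta^2)$, so $g$ extends to a continuous function with $g(0)=0$.

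Next I would fix a large cutoff $R$ and estimate $|C_{F_a}(\xi)-C_\Phi(\xi)|/\xi^2$ separately for $|\xi|\le R$ and for $|\xi|>R$. On the low-frequency range $|\beta_k\xi|\le\sqrt{1-a^2}\,R$ for all $k$, so the bound above yields $\sup_{0<|\xi|\le R}|C_{F_a}(\xi)-C_\Phi(\xi)|/\xi^2\le\sup_{|\eta|\le\sqrt{1-a^2}\,R}g(\eta)$, which tends to $0$ as $a\to1-$ for each fixed $R$, by continuity of $g$ at $0$.

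The high-frequency range is the real obstacle, and there the plan is to use that $C_G$ genuinely contracts near the origin: from $C_G''(0)=-1$ one gets constants $c,\eta_0>0$ with $|C_G(\eta)|\le e^{-c\eta^2}$ for $|\eta|\le\eta_0$. Keeping only the factors with $|\beta_k\xi|\le\eta_0$ in the product for $C_{F_a}$ gives $|C_{F_a}(\xi)|\le\exp\!\bigl(-c\xi^2\sum_{k:\,\beta_k|\xi|\le\eta_0}\beta_k^2\bigr)$, and a short geometric-series computation shows $\sum_{k:\,\beta_k|\xi|\le\eta_0}\beta_k^2\ge\min\!\bigl(1,\,a^2\eta_0^2/((1-a^2)\xi^2)\bigr)$. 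Splitting $\{|\xi|>R\}$ at $|\xi|=\eta_0/\sqrt{1-a^2}$, and using $|C_\Phi(\xi)|/\xi^2\le e^{-R^2/2}/R^2$ there, one ends up with
\[
d_2(F_a,\Phi)\le\sup_{|\eta|\le\sqrt{1-a^2}R}g(\eta)\;+\;\frac{e^{-cR^2}+e^{-R^2/2}}{R^2}\;+\;\frac{1-a^2}{\eta_0^2}\,e^{-ca^2\eta_0^2/(1-a^2)} .
\]
Given $\varepsilon>0$, one first chooses $R$ making the middle term smaller than $\varepsilon/2$, and then lets $a\to1-$ to make the remaining two terms smaller than $\varepsilon/2$; this establishes \eqref{mi}. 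I expect the delicate step to be precisely this uniform-in-$\xi$ decay of $|C_{F_a}(\xi)|$ at high frequencies: the trivial bound $|C_G(\beta_k\xi)|\le1$ on each factor is useless, so one must quantify the quadratic dip of $|C_G|$ at the origin and count how many of the shrinking frequencies $\beta_k\xi$ land in the region where that dip is effective — everything else is routine.
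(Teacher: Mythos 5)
Your argument is correct, but it takes a genuinely different route from the paper's. The paper never expands $C_{F_a}$ as an infinite product: it realizes $F_a$ as the fixed point of the map $T_a$ sending $G$ to the law of $aY+\sqrt{1-a^2}\,X$, proves the contraction estimate $d_2(T_a[G],T_a[H])\le a^2 d_2(G,H)$, deduces $d_2(G,F_a)\le d_2(G,T_a[G])/(1-a^2)$, and applies this with $G=\Phi$; this yields Lemma \ref{ncomp}, namely $d_2(F_a,\Phi)\le\sup_{w\neq0} e^{-(aw)^2/(2(1-a^2))}\,|C_F(w)-C_\Phi(w)|/w^2$, after which the theorem follows from a split at a small $\delta$: near $0$ the ratio is small by the second-order behaviour of $C_F$, and for $|w|\ge\delta$ the Gaussian factor $e^{-(a\delta)^2/(2(1-a^2))}$, inherited from the factor $C_\Phi(a\xi)$ in $T_a[\Phi]$, does the killing as $a\to1-$. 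You instead use the explicit representation $C_{F_a}(\xi)=\prod_k C_F(\beta_k\xi)$ with $\beta_k=\sqrt{1-a^2}\,a^k$ and a Lindeberg-type telescoping against $\prod_k e^{-\beta_k^2\xi^2/2}$, the identity $\sum_k\beta_k^2=1$ playing the role of the paper's factor $1/(1-a^2)$. The paper's route buys automatic high-frequency suppression (supplied by the Gaussian factor) and reuses the same two lemmas verbatim for the quantitative Theorem \ref{main3}; your route buys self-containedness, needing no fixed-point or contraction apparatus. One substantive remark: the step you single out as delicate --- the uniform decay of $|C_{F_a}(\xi)|$ at high frequency via the quadratic dip of $|C_F|$ --- is correct as you sketch it (your lower bound $\sum_{k:\,\beta_k|\xi|\le\eta_0}\beta_k^2\ge\min\bigl(1,\,a^2\eta_0^2/((1-a^2)\xi^2)\bigr)$ checks out), but it is unnecessary: since your low-frequency estimate $\sup_{0<|\xi|\le R}|C_{F_a}(\xi)-C_\Phi(\xi)|/\xi^2\le\sup_{|\eta|\le\sqrt{1-a^2}\,R}g(\eta)$ holds for every \emph{fixed} $R$, you may choose $R$ large first and then use the trivial bound $|C_{F_a}(\xi)-C_\Phi(\xi)|/\xi^2\le 2/R^2$ on $|\xi|>R$; this already gives (\ref{mi}). (Contrast with the paper's split, where the cutoff $\delta$ must be chosen small, so the outer region carries the large constant $2/\delta^2$ and genuinely needs the Gaussian damping.)
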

(\ref{mi}) implies pointwise convergence of $C_{F_a}$ to $C_{\Phi}$, which, by
Levy's Continuity Theorem (see e.g. \cite{chung}, Sec 6.3), implies (\ref{cid}). The validity of the discounted central limit theorem
(\ref{clt}), without any assumption on moments higher than $2$, thus follows from Theorem \ref{main2} - we note however that it also follows from previous results such as those in \cite{embrechts}.

A quantitative version of Theorem \ref{main2} can be obtained if we assume that $X_n$ have a finite
$s$-moment for some $s>2$. Set
$$\hat{X}_0=\sigma^{-1}(X_0-\mu),$$
and let $F$ denote its distribution function:
\begin{equation}
\label{dpsi}F(x)=P\left(\hat{X}_0\leq x \right).
\end{equation}
Note that by (\ref{ba}) have $F\in  {\cal{P}}^2$.
\begin{theorem}\label{main3} Assume $F\in  {\cal{P}}^s$ where $s\in (2,3]$. Then, for $a\in (0,1)$
\begin{equation}\label{nn}d_2(F_a,\Phi)\leq  \left[\frac{(s-2)(1-a^2)}{e\cdot a^2 }\right]^{\frac{1}{2}(s-2)}\cdot d_s(F,\Phi).\end{equation}
\end{theorem}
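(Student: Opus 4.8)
The plan is to exploit the self-similar structure of the discounted sum. Write $b=\sqrt{1-a^2}$ and $\hat X_n=\sigma^{-1}(X_n-\mu)$. Using the a.s. convergence of (\ref{rs}) and splitting off the $n=0$ term, one gets from (\ref{nz}) the distributional identity $\hat S_a = b\,\hat X_0 + a\,\hat S_a'$, where $\hat S_a'$ is an independent copy of $\hat S_a$; passing to characteristic functions (by independence),
\begin{equation*}
C_{F_a}(\xi)=C_F(b\xi)\,C_{F_a}(a\xi).
\end{equation*}
The crucial point is that the normal law obeys the \emph{same} functional equation, since $C_\Phi(\xi)=e^{-\xi^2/2}=e^{-b^2\xi^2/2}e^{-a^2\xi^2/2}=C_\Phi(b\xi)\,C_\Phi(a\xi)$. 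This is the fixed-point structure alluded to in the Introduction, and the proof below amounts to showing that the map behind it contracts by a factor $a^2$ in $d_2$.

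Next I would subtract the two identities, add and subtract $C_F(b\xi)C_\Phi(a\xi)$, and group so that the difference $C_F-C_\Phi$ multiplies $C_\Phi$ rather than $C_{F_a}$:
\begin{equation*}
C_{F_a}(\xi)-C_\Phi(\xi)=C_F(b\xi)\bigl[C_{F_a}(a\xi)-C_\Phi(a\xi)\bigr]+\bigl[C_F(b\xi)-C_\Phi(b\xi)\bigr]C_\Phi(a\xi).
\end{equation*}
The grouping is what makes the estimate work: multiplying $C_F-C_\Phi$ by $C_\Phi(a\xi)=e^{-a^2\xi^2/2}$ supplies Gaussian decay, whereas the other choice leaves an uncompensated factor $|\xi|^{s-2}$ after dividing by $\xi^2$. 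Using $|C_F(b\xi)|\le 1$, dividing by $\xi^2$, and substituting $\eta=b\xi$ in the last term, one obtains for every $\xi\neq 0$
\begin{equation*}
\frac{|C_{F_a}(\xi)-C_\Phi(\xi)|}{\xi^2}\ \le\ a^2\,d_2(F_a,\Phi)\ +\ (1-a^2)\,e^{-\frac{a^2\eta^2}{2(1-a^2)}}\,\frac{|C_F(\eta)-C_\Phi(\eta)|}{\eta^2}.
\end{equation*}
Since $|C_F(\eta)-C_\Phi(\eta)|/\eta^2\le d_s(F,\Phi)\,|\eta|^{s-2}$ by (\ref{fd}), the problem reduces to the one-variable maximization of $e^{-\frac{a^2\eta^2}{2(1-a^2)}}|\eta|^{s-2}$; with $t=\eta^2$ this is $\sup_{t>0}e^{-\beta t}t^{(s-2)/2}$ with $\beta=\frac{a^2}{2(1-a^2)}$, attained at $t=(s-2)/(2\beta)$ with value $\bigl[\frac{(s-2)(1-a^2)}{e\,a^2}\bigr]^{(s-2)/2}$.

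Taking the supremum over $\xi\neq 0$ on the left then yields
\begin{equation*}
d_2(F_a,\Phi)\ \le\ a^2\,d_2(F_a,\Phi)+(1-a^2)\left[\frac{(s-2)(1-a^2)}{e\,a^2}\right]^{\frac{1}{2}(s-2)}d_s(F,\Phi),
\end{equation*}
and here is where the hypotheses pay off: $F_a,\Phi\in\mathcal{P}^2$, so $d_2(F_a,\Phi)<\infty$ by Proposition 2.6 of \cite{carrillo}, hence it may be moved to the left-hand side and the common factor $1-a^2$ cancelled, giving exactly (\ref{nn}). The main thing to be careful about is precisely this closing rearrangement, which is only licensed by the a priori finiteness of $d_2(F_a,\Phi)$, together with the choice of grouping above; granting those, the argument is a short computation — and letting $a\to 1-$ recovers Theorem \ref{main2} under the additional assumption $F\in\mathcal{P}^s$.
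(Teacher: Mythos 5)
Your proof is correct, and it reaches the paper's inequality by a slightly different route than the paper does. The paper packages the fixed-point structure into two lemmas: it defines the operator $T_a$ with $C_{T_a[G]}(\xi)=C_F(\sqrt{1-a^2}\,\xi)C_G(a\xi)$, shows it contracts by $a^2$ in $d_2$ and has $F_a$ as its fixed point, derives the approximate-fixed-point bound $d_2(G,F_a)\leq d_2(G,T_a[G])/(1-a^2)$ by telescoping the iterates $T_a^n[G]$, and then specializes to $G=\Phi$, which after the substitution $w=\sqrt{1-a^2}\,\xi$ gives exactly the bound $d_2(F_a,\Phi)\leq\sup_{w\neq 0}\bigl[e^{-\frac{(aw)^2}{2(1-a^2)}}|C_F(w)-C_\Phi(w)|/w^2\bigr]$; Theorem \ref{main3} then follows by the same $d_s$ bound and the same elementary maximization you perform. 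You instead use the fixed-point identity $C_{F_a}(\xi)=C_F(\sqrt{1-a^2}\,\xi)C_{F_a}(a\xi)$ (which is the paper's Lemma \ref{fp}(ii), and your derivation of it by splitting off the $n=0$ term is the paper's argument too), subtract the analogous Gaussian identity with your grouping, and close the resulting self-referential inequality $d_2(F_a,\Phi)\leq a^2 d_2(F_a,\Phi)+(1-a^2)\sup[\cdots]$ by invoking the a priori finiteness of $d_2(F_a,\Phi)$. This collapses the paper's Lemma \ref{fp}(iii) and Lemma \ref{ncomp} into one step, at the cost of making the finiteness of $d_2(F_a,\Phi)$ load-bearing — which you correctly justify via $F_a,\Phi\in\mathcal{P}^2$ and Proposition 2.6 of \cite{carrillo}; note the paper's telescoping proof quietly needs the same finiteness when it sends $a^{2n}d_2(G,F_a)\to 0$, so neither argument truly avoids it. What the paper's more modular setup buys is reuse: the contraction and approximate-fixed-point lemmas also drive Theorem \ref{main2} (where only $s=2$ is available), whereas your streamlined argument is tailored to the quantitative statement; as you note, it recovers Theorem \ref{main2} only under the extra hypothesis $F\in\mathcal{P}^s$ with $s>2$.
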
 

Note that $s>2$ implies that the right-hand side of (\ref{nn}) goes to $0$ as $a\rightarrow 1$, so that (\ref{nn}) implies (\ref{mi}) for $s>2$ (but not for $s=2$, which
is the reason that Theorem \ref{main2} needs a separate proof).

Comparing the bound of Theorem \ref{main3} with Gerber's bound (\ref{gerber}), we note several differences.

(1) Theorem $\ref{main3}$ provides a bound whenever some moment of order $s>2$ is finite, while (\ref{gerber}) 
requires a finite third central moment for $X_0$.

(2) The bound (\ref{gerber}) is universal, hence does not take into account the distance between the distribution 
of $X_0$ and the normal distribution. In (\ref{nn}), the bound becomes small if $X_0$ is close to normal.

(3) A major difference is of course the fact that the distance between distributions is measured differently: while 
(\ref{nn}) uses a Fourier metric, (\ref{gerber}) uses the Kolmogorov metric. In fact it is possible to bound the Kolmogorov metric
in terms of the $d_2$ metric: using the Berry-Eseen inequality (see \cite{feller}, Sec. XVI.4, Lemma 2) we get
$$|F_a(x)-\Phi(x)|\leq \frac{1}{\pi}\int_{-T}^T \frac{|C_{F_a}(\xi)-C_{\Phi}(\xi)|}{|\xi|^2} |\xi|d\xi +\frac{24}{\pi T}$$
$$\leq \frac{1}{\pi}\cdot d_2(F_a,\Phi)\int_{-T}^T |\xi|d\xi +\frac{24}{\pi T}= \frac{1}{\pi}\cdot d_2(F_a,\Phi)T^2 +\frac{24}{\pi T},$$
and optimizing over $T$ gives
$$\sup_{x\in\Real} |F_a(x)-\Phi (x)|\leq \frac{3\cdot 12^{\frac{2}{3}}}{\pi}\cdot \left(d_2(F_a,\Phi) \right)^{\frac{1}{3}},$$
so that convergence in the $d_2$ metric implies convergence in the Kolmogorov metric (as well as in the Wasserstein metric, see
\cite{carrillo}, Theorem 2.21). However, it should be noted that using this bound together with (\ref{nn}) gives a
bound of order $O((1-a)^{\frac{1}{6}(s-2)})$ as $a\rightarrow 1$ for the convergence of the Kolmogorov metric, which in the case 
$s=3$ (which is relevant for this comparison) gives $O((1-a)^{\frac{1}{6}})$, a weaker convergence rate than
the one given by (\ref{gerber}). 

We thus conclude that none of the inequalities (\ref{gerber}) and (\ref{nn}) is a consequence of the other, and each has its 
advantages. It might be an interesting problem to obtain bounds which combine the advanatges of the two inequalities.

\section{Proofs of the theorems}

Noting that $\hat{S}_a$ does not change if a linear function is applied to all $X_n$'s, there is no loss of generality in proving our results 
under the normalization
$$E(X_n)=0,\;\;\;Var(X_n)=1,$$
which we will henceforth assume. Under this assumption the distribution $F$ given by (\ref{dpsi}) is simply the distribution of the $X_n$'s:
\begin{equation}
\label{dpsi1}F(x)=P\left(X_0\leq x \right),
\end{equation}
and (\ref{nz}) becomes
\begin{equation}\label{nz1}\hat{S}_a=\sqrt{1-a^2}\cdot S_a.\end{equation}


For $a\in [0,1)$, we now define the operator $T_a: {\cal{P}}^2\rightarrow  {\cal{P}}^2$, whose unique fixed point will later be shown to be the distribution function $F_a$ of $\hat{S}_a$. 
If $G\in  {\cal{P}}^2$, let $Y$ be a random variable with distribution function $G$. Let $X$ be a random variable independent of $Y$, with
distribution function $F$ given by (\ref{dpsi1}). Then $T_a[G]$ is defined to be the distribution function of $a Y +\sqrt{1-a^2}\cdot X$:
$$T_a[G](x)=P\left(a Y +\sqrt{1-a^2}\cdot X \leq x \right).$$
Since
$$E\left(a Y +\sqrt{1-a^2}\cdot X\right)= aE(Y)+\sqrt{1-a^2}\cdot E(X)=0,$$
$$Var\left(a Y +\sqrt{1-a^2}\cdot X\right)= a^2Var(Y)+(1-a^2)Var(X)=a^2 +(1-a^2)=1,$$
we indeed have $T_a[G]\in {\cal{P}}^2$.

By the above definition, the $n$-fold composition $T_a^n[G]$ ($n\geq 1$) is the distribution function of the random variable $Y_n$  
given by the autoregressive process
\begin{equation}\label{ar}Y_{n+1}=a Y_n + \sqrt{1-a^2}\cdot X_n,\;\;n\geq 0
\end{equation}
when $G$ is the distribution function of $Y_0$.

In terms of characteristic functions we have
\begin{equation}
\label{dta}C_{T_a[G]}(\xi)=C_{F}\left(\sqrt{1-a^2}\cdot \xi\right)\cdot C_G(a\xi).
\end{equation}

The following Lemma summarizes properties of the operator $T_a$.

\begin{lemma}\label{fp} For $a\in [0,1)$:  
	\begin{itemize}
\item[(i)] If $G,H\in  {\cal{P}}^2$ then For any integer $n\geq 1$
\begin{equation}\label{cont}d_2(T_a^n[G],T_a^n[H])\leq a^{2n}\cdot d_2(G,H).\end{equation}
	
\item[(ii)]	$F_a$, given by (\ref{dfa}), is a fixed point of $T_a$, and if $a>0$ it is the unique fixed point.

\item[(iii)] For any $G\in  {\cal{P}}^2$ we have
\begin{equation}\label{bob}d_2\left( G,F_a \right)\leq \frac{d_2\left(G,T_a[G]\right)}{1-a^2}.\end{equation}
\end{itemize}
\end{lemma}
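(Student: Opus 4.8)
The plan is to prove the three parts of Lemma~\ref{fp} in the order $(i)$, $(ii)$, $(iii)$, since each builds on the previous. For part $(i)$, the key computational fact is the composition formula \eqref{dta} for characteristic functions. Applying it $n$ times gives
\[
C_{T_a^n[G]}(\xi)-C_{T_a^n[H]}(\xi)=\left(\prod_{k=1}^{n}C_F\!\left(\sqrt{1-a^2}\,a^{k-1}\xi\right)\right)\cdot\bigl(C_G(a^n\xi)-C_H(a^n\xi)\bigr),
\]
after telescoping (at the innermost step the $C_F$ factors cancel between $G$ and $H$ and only the difference $C_G(a^n\xi)-C_H(a^n\xi)$ survives). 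Each factor $C_F(\cdot)$ has modulus $\le 1$ because $F$ is a probability distribution, so
\[
\bigl|C_{T_a^n[G]}(\xi)-C_{T_a^n[H]}(\xi)\bigr|\le \bigl|C_G(a^n\xi)-C_H(a^n\xi)\bigr|\le d_2(G,H)\,|a^n\xi|^2=a^{2n}\,|\xi|^2\,d_2(G,H),
\]
and dividing by $|\xi|^2$ and taking the supremum over $\xi\neq0$ yields \eqref{cont}.

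For part $(ii)$, I would first check that $F_a\in{\cal P}^2$ (immediate from the normalization \eqref{nz1} and the hypothesis $\sigma=1$) and then that it is a fixed point: writing $\hat S_a=\sqrt{1-a^2}\,S_a$ and splitting off the first term, $S_a=X_0+a\sum_{n\ge1}a^{n-1}X_n$, where the tail sum is independent of $X_0$ and has the same distribution as $S_a$ itself. Hence $\hat S_a\stackrel{D}{=}a\hat S_a'+\sqrt{1-a^2}\,X_0$ with $\hat S_a'$ an independent copy, which is precisely the statement $T_a[F_a]=F_a$. Uniqueness for $a>0$ follows from $(i)$: if $G,H$ are both fixed points then $d_2(G,H)=d_2(T_a^n[G],T_a^n[H])\le a^{2n}d_2(G,H)\to0$, and since $d_2(G,H)<\infty$ on ${\cal P}^2$ (the remark after \eqref{fd}), we get $d_2(G,H)=0$, hence $G=H$ by injectivity of the characteristic-function transform.

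For part $(iii)$, the plan is the standard "geometric series" estimate for contractions. Using the triangle inequality and iterating, for any $m\ge1$,
\[
d_2(G,T_a^m[G])\le\sum_{n=0}^{m-1}d_2(T_a^n[G],T_a^{n+1}[G])\le\sum_{n=0}^{m-1}a^{2n}\,d_2(G,T_a[G])\le\frac{d_2(G,T_a[G])}{1-a^2},
\]
where the middle inequality applies \eqref{cont} with $H=T_a[G]$. On the other hand, by $(i)$ and $(ii)$, $d_2(T_a^m[G],F_a)=d_2(T_a^m[G],T_a^m[F_a])\le a^{2m}d_2(G,F_a)\to0$ as $m\to\infty$, so $T_a^m[G]\to F_a$ in $d_2$; passing to the limit $m\to\infty$ in the displayed bound (the left side converges to $d_2(G,F_a)$ since $d_2$-convergence implies the $d_2$-distances converge) gives \eqref{bob}.

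The only genuinely delicate point is the fixed-point identity in $(ii)$: one must be careful that the almost-sure convergence of the series \eqref{rs} is what licenses the distributional decomposition $S_a\stackrel{D}{=}X_0+a S_a'$ with $S_a'$ independent of $X_0$ — this is where the i.i.d.\ structure and the a.s.\ convergence cited from \cite{chung} enter. Everything else is routine: the contraction estimate in $(i)$ is a two-line modulus bound, and $(iii)$ is the usual completeness-style argument for contractions, needing only that $d_2$ is finite on ${\cal P}^2$.
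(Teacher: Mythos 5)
Your proposal is correct and follows essentially the same route as the paper: the contraction estimate from the characteristic-function identity (\ref{dta}) together with $|C_F|\le 1$, the fixed-point identity via the decomposition $S_a \stackrel{D}{=} X_0 + a S_a'$ with an independent copy, and the geometric-series/triangle-inequality argument for part (iii). The only cosmetic differences are that you iterate the product formula explicitly instead of inducting from the $n=1$ case, and in (iii) you pass to the limit using continuity of the metric under $d_2$-convergence rather than absorbing the $a^{2n}d_2(G,F_a)$ term as the paper does, both steps resting on the same finiteness of $d_2$ on ${\cal P}^2$.
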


\begin{proof} (i) For $n=1$ we have, using (\ref{dta}) and the fact that $|C_{F}(\xi)|\leq 1$,
		$$d_2(T_a[G],T_a[H])= \sup_{\xi\neq 0} \frac{|C_{F}(\sqrt{1-a^2}\cdot \xi)| \cdot |C_G(a\xi)-C_H(a\xi)|}{\xi^2}$$
	$$\leq  \sup_{\xi\neq 0} \frac{ |C_G(a\xi)-C_H(a\xi)|}{\xi^2}=a^2\cdot  \sup_{\xi\neq 0} \frac{ |C_G(a\xi)-C_H(a\xi)|}{(a\xi)^2}=a^2\cdot d_2(G,H).$$
	Proceeding by induction, we have
	$$d_2\left(T_a^{n+1}[G],T_a^{n+1}[H]\right)\leq a^2\cdot d_2\left(T_a^n[G],T_a^n[H]\right)=a^{2(n+1)}\cdot d_2(G,H).$$
	
	 (ii) We denote equality in distribution of two random variables by $\substack{D\\=}$. Let $X$ be a random variable with distribution $F$, independent of $S_a$.
	 We claim that
	 \begin{equation}
	 \label{yy}a\hat{S}_a+\sqrt{1-a^2}\cdot X\;\;\substack{D\\=}\;\;\hat{S}_a,
	 \end{equation}
	 which implies $T_a[F_a]=F_a.$
	 To show (\ref{yy}), note that we have
	 $$S_a=\sum_{n=0}^\infty a^n X_n = X_0+ a\sum_{n=0}^\infty a^n X_{n+1}\;\;\substack{D\\=}\;\;X+aS_a,$$
	 so that, using (\ref{nz1}),
	 $$a\hat{S}_a+\sqrt{1-a^2}\cdot X=\sqrt{1-a^2}\cdot \left[aS_a +X\right]\;	\substack{D\\=}\;\sqrt{1-a^2}\cdot S_a=\hat{S}_a,$$
	 so we have (\ref{yy}).
	 
To show uniqueness of the fixed point when $a>0$, assume $G\in {\cal{P}}^2$, $T_a[G]=G$. Using (\ref{cont}),
$$d_2(F_a,G)=d_2(T[F_a],T[G])\leq a^2\cdot d_2(F_a,G)\;\;\Rightarrow\;\; d_2(F_a,G)=0\;\;\Rightarrow\;\; G=F_a.$$

(iii) By (\ref{cont}) we have
$$d_2\left(T_a^{k}[G],T_a^{k+1}[G]\right)\leq a^{2k} d_2\left(G,T_a[G]\right),$$
so the triangle inequality gives
\begin{equation}
\label{ini}
d_2\left(G,T_a^{n}[G] \right)\leq \sum_{k=0}^{n-1} d_2\left(T_a^{k}[G],T_a^{k+1}[G]\right)\leq  \frac{1-a^{2n}}{1-a^2}\cdot d_2\left(G,T_a[G]\right).
\end{equation}
From (i),(ii) we have
\begin{equation}\label{conv}d_2(T_a^n[G],F_a)=d_2(T_a^n[G],T_a^n[F_a])\leq a^{2n}\cdot d_2(G,F_a).\end{equation}
Therefore, using the triangle inequality and (\ref{ini}),(\ref{conv}),
\begin{eqnarray*}
	d_2\left(G,F_a \right)&\leq& d_2\left(G, T_a^n[G] \right) + d_2\left( T_a^n[G],F_a \right)\nonumber\\ 
	&\leq& \frac{1-a^{2n}}{1-a^2}\cdot d_2\left(G,T_a[G]\right)+a^{2n}\cdot d_2\left(G,F_a \right),
\end{eqnarray*}
and taking $n\rightarrow \infty$ we obtain (\ref{bob}).
\end{proof} 

The following Lemma plays a key role in proving the theorems:

\begin{lemma}\label{ncomp} Let $\Phi$ be the Normal distribution function (\ref{dn}). Then for $a\in [0,1)$
$$d_2\left(F_a,\Phi \right)\leq \sup_{w\neq 0} \left[e^{-\frac{(a w)^2}{2(1-a^2)}}\cdot\frac{|C_{F}( w)-C_{\Phi}(w)|}{w^2}\right].$$
\end{lemma}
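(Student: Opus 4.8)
The plan is to apply part (iii) of Lemma~\ref{fp} with the specific choice $G=\Phi$. Since the Normal distribution has expectation $0$ and variance $1$, we have $\Phi\in{\cal{P}}^2$, so (\ref{bob}) yields
$$d_2(\Phi,F_a)\leq \frac{d_2(\Phi,T_a[\Phi])}{1-a^2}.$$
The whole problem thus reduces to estimating $d_2(\Phi,T_a[\Phi])$, i.e. understanding how far $\Phi$ moves under one application of $T_a$. Using the characteristic function formula (\ref{dta}) together with the fact that the standard Normal characteristic function is $C_\Phi(\xi)=e^{-\xi^2/2}$, we have
$$C_{T_a[\Phi]}(\xi)=C_F\!\left(\sqrt{1-a^2}\,\xi\right)\cdot e^{-\frac{(a\xi)^2}{2}}.$$
On the other hand $C_\Phi(\xi)=e^{-\frac{\xi^2}{2}}=e^{-\frac{(1-a^2)\xi^2}{2}}\cdot e^{-\frac{(a\xi)^2}{2}}$, and the key point is that $e^{-\frac{(1-a^2)\xi^2}{2}}$ is precisely $C_\Phi(\sqrt{1-a^2}\,\xi)$. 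Hence both characteristic functions share the factor $e^{-\frac{(a\xi)^2}{2}}$, and
$$C_{T_a[\Phi]}(\xi)-C_\Phi(\xi)=e^{-\frac{(a\xi)^2}{2}}\cdot\left[C_F\!\left(\sqrt{1-a^2}\,\xi\right)-C_\Phi\!\left(\sqrt{1-a^2}\,\xi\right)\right].$$

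Now I would divide by $\xi^2$ and take the supremum over $\xi\neq 0$. Writing $w=\sqrt{1-a^2}\,\xi$, so $\xi^2=w^2/(1-a^2)$ and $(a\xi)^2=a^2w^2/(1-a^2)$, this gives
$$d_2(T_a[\Phi],\Phi)=\sup_{w\neq 0}\left[e^{-\frac{a^2w^2}{2(1-a^2)}}\cdot\frac{(1-a^2)\,|C_F(w)-C_\Phi(w)|}{w^2}\right]=(1-a^2)\cdot\sup_{w\neq 0}\left[e^{-\frac{(aw)^2}{2(1-a^2)}}\cdot\frac{|C_F(w)-C_\Phi(w)|}{w^2}\right].$$
Dividing by $1-a^2$ as required by (\ref{bob}) cancels the prefactor and yields exactly the claimed inequality. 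I should double-check the degenerate case $a=0$: then $T_0[\Phi]$ has characteristic function $C_F(\xi)$, so $T_0[\Phi]=F$ and the asserted bound reads $d_2(F,\Phi)\leq\sup_w |C_F(w)-C_\Phi(w)|/w^2=d_2(F,\Phi)$, which holds with equality — consistent.

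I do not expect any genuine obstacle here: the proof is essentially an algebraic identity for characteristic functions combined with Lemma~\ref{fp}(iii). The one point requiring a little care is the change of variables and making sure the supremum is taken over the full range $w\neq 0$ (which it is, since $\xi\mapsto\sqrt{1-a^2}\,\xi$ is a bijection of $\Real\setminus\{0\}$ for $a\in[0,1)$); a secondary minor point is noting that both suprema are finite, which follows because $F,\Phi\in{\cal{P}}^2$ so $d_2(F,\Phi)<\infty$ and the exponential factor only decreases the quantity. The real content of the lemma — the Gaussian damping factor $e^{-(aw)^2/2(1-a^2)}$ that will later force the bound in Theorem~\ref{main3} to vanish as $a\to 1$ — comes out automatically from the self-similarity of the Gaussian under convolution, which is what makes $\Phi$ the natural test distribution to feed into $T_a$.
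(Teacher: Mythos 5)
Your proof is correct and follows essentially the same route as the paper: apply Lemma~\ref{fp}(iii) with $G=\Phi$, factor out $e^{-\frac{(a\xi)^2}{2}}$ from $C_{T_a[\Phi]}(\xi)-C_\Phi(\xi)$ using (\ref{dta}) and the Gaussian characteristic function, and substitute $w=\sqrt{1-a^2}\,\xi$ so the factor $1-a^2$ cancels. Your additional checks (the case $a=0$, bijectivity of the substitution, finiteness via $F,\Phi\in{\cal{P}}^2$) are fine but not needed beyond what the paper does.
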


\begin{proof} 
	Applying (\ref{bob}) with $G=\Phi$ we have
\begin{equation}
\label{lkl}d_2\left(\Phi,F_a \right)\leq \frac{d_2\left(\Phi,T_a[\Phi]\right)}{1-a^2}.
\end{equation}
The characteristic function of the normal distribution  $\Phi$ is given by
$C_{\Phi}(\xi)=e^{-\frac{1}{2}\xi^2}$,
so using (\ref{dta}) and the substitution $w=\sqrt{1-a^2}\cdot \xi$, we have
\begin{eqnarray}\label{as}
&&\frac{d_2(T_a[\Phi],\Phi)}{1-a^2}=\sup_{\xi\neq 0} \frac{|C_{T_a[\Phi]}(\xi)-C_{\Phi}(\xi)|}{(1-a^2)\xi^2}\\
&=&\sup_{\xi\neq 0} \frac{|C_{F}( \sqrt{1-a^2}\cdot \xi)e^{-\frac{1}{2}(a\xi)^2}-e^{-\frac{1}{2}\xi^2}|}{(1-a^2)\xi^2}\nonumber\\
&=&\sup_{\xi\neq 0} \left[e^{-\frac{(a\xi)^2}{2}}\cdot\frac{|C_{F}(\sqrt{1-a^2}\cdot \xi)-e^{-\frac{1}{2}(1-a^2)\xi^2}|}{(1-a^2)\xi^2}\right]\nonumber\\
&=&\sup_{w\neq 0} \left[ e^{-\frac{(a w)^2}{2(1-a^2)}}\cdot\frac{|C_{F}(w)-e^{-\frac{1 }{2}w^2}|}{w^2}\right]=\sup_{w\neq 0} \left[e^{-\frac{(a w)^2}{2(1-a^2)}}\cdot\frac{|C_{F}(w)-C_{\Phi}(w)|}{w^2}\right].\nonumber
\end{eqnarray}
Combining (\ref{lkl}) and (\ref{as}) we have the result.
\end{proof}

We can now give the proofs of the theorems.

\begin{proof}[Proof of Theorem \ref{main2}]
By Lemma \ref{ncomp} it suffices to show that
$$\lim_{a\rightarrow 1}\sup_{w\neq 0} \left[e^{-\frac{(a w)^2}{2(1-a^2)}}\cdot\frac{|C_{F}( w)-C_{\Phi}(w)|}{w^2}\right]=0.
$$
Fix $\epsilon>0$. 
By the assumption (\ref{ba}), $C_F$ and $C_{\Phi}$ are twice differentiable, with 
$$C_F(0)=C_{\Phi}(0)=1,\;\;C_F'(0)=C_{\Phi}'(0)=0,\;\;C_F''(0)=C_{\Phi}''(0)=-1$$
so application of L'Hospital's rule gives
$$\lim_{w\rightarrow 0}\frac{C_{F}(w)-C_{\Phi}(w)}{w^2}=0.$$
Therefore we can choose $\delta>0$ so that
\begin{equation}\label{p1}|w|< \delta \;\;\Rightarrow\;\; e^{-\frac{(aw)^2}{2(1-a^2)}}\cdot\frac{|C_{F}(w)-C_{\Phi}(w)|}{w^2}\leq \frac{|C_{F}(w)-C_{\Phi}(w)|}{w^2}< \epsilon. \end{equation}
Using the fact that $|C_F(w)|,|C_{\Phi}(w)|\leq 1$, we have
$$|w|\geq \delta \;\;\Rightarrow\;\;e^{-\frac{(aw)^2}{2(1-a^2)}}\cdot\frac{|C_{F}(w)-C_{\Phi}(w)|}{w^2}\leq \frac{2}{w^2}e^{-\frac{(aw)^2}{2(1-a^2)}}\leq  \frac{2}{\delta^2}e^{-\frac{(a\delta)^2}{2(1-a^2)}}.$$
The right-hand side of the above inequality goes to $0$ as $a\rightarrow 1$, hence for $a$ sufficiently close to $1$ we have
\begin{equation}\label{p2} |w|\geq \delta \;\;\Rightarrow\;\;e^{-\frac{(aw)^2}{2(1-a^2)}}\cdot\frac{|C_{F}(w)-C_{\Phi}(w)|}{w^2}<\epsilon.\end{equation}
From (\ref{p1}),(\ref{p2}) we have that, for $a$ sufficiently close to $1$,
$$\sup_{w\neq 0} \left[e^{-\frac{(a w)^2}{2(1-a^2)}}\cdot\frac{|C_{F}( w)-C_{\Phi}(w)|}{w^2}\right]<\epsilon,$$
 concluding the proof.
\end{proof}

\begin{proof}[Proof of Theorem \ref{main3}] Assume $s>2$. Using Lemma \ref{ncomp} we have 
\begin{eqnarray}\label{rw}d_2\left(F_a,\Phi \right)&\leq&\sup_{w\neq 0} \left[e^{-\frac{(aw)^2}{2(1-a^2)}}\cdot\frac{|C_{F}(w)-C_{\Phi}(w)|}{w^2}\right]\nonumber\\
&=&\sup_{w\neq 0} \left[e^{-\frac{(aw)^2}{2(1-a^2)}}\cdot |w|^{s-2}\cdot\frac{|C_{F}(w)-C_{\Phi}(w)|}{|w|^s}\right]\nonumber\\
&\leq& d_s(F,\Phi)\cdot \sup_{w\neq 0} \left[e^{-\frac{(aw)^2}{2(1-a^2)}}|w|^{s-2}\right]\end{eqnarray}
Computing the supremum on the right-hand side of (\ref{rw}) by elementary calculus we find that it is attained at $w=\pm\frac{\sqrt{(s-2)(1-a^2)}}{a}$, hence
$$\sup_{w\neq 0} \left[e^{-\frac{(aw)^2}{2(1-a^2)}}|w|^{s-2}\right]=\left[\frac{ (s-2)(1-a^2)}{e\cdot a^2 }\right]^{\frac{1}{2}(s-2)},$$
which gives (\ref{nn}).
\end{proof}


\begin{thebibliography}{0}
		
\bibitem{carrillo} J.A. Carrillo, G. Toscani, Contractive probability metrics and asymptotic behavior of dissipative kinetic equations, Riv. Mat. Univ. Parma {\bf{6}} (2007), 75-198. 

\bibitem{chung} K.L. Chung, A Course in Probability Theory, 3rd edition, Academic Press, London, 2001.

\bibitem{embrechts} P. Embrechts and M. Maejima. The central limit theorem for summability methods of iid random variables, Z. Wahrsch. Verw. Gebiete {\bf{68}} (1984), 191-204.

\bibitem{feller} W. Feller, An Introduction to Probability Theory and its Applications, Vol. 2, Wiley, New-York, 1968.

\bibitem{gabetta} E. Gabetta, G. Toscani and W. Wennberg, Metrics for probability distributions
and the trend to equilibrium for solutions of the Boltzmann equation, J. Statist. Phys. {\bf{81}} (1995), 901-934.

\bibitem{gerber}  H.U. Gerber, The discounted central limit theorem and its Berry-Esseen analogue, The Annals of Mathematical Statistics {\bf{42}} (1971), 389-392.‏

\bibitem{goudon} T. Goudon, S. Junca and G. Toscani, Fourier-based distances and Berry-Esseen like inequalities for smooth densities, Monatsh. Math {\bf{135}} (2002), 115-136.

\bibitem{horvath} L. Horv\'ath, Approximation for Abel sums of independent, identically distributed random variables, Statistics \& Probability Letters {\bf{3}} (1985), 221-225.‏

\bibitem{saulis} L. Saulis,  D. Deltuvien\.e, The discounted limit theorems, Acta Applicandae Mathematicae {\bf{90}} (2006), 219-226.‏

\bibitem{whitt} W. Whitt, Stochastic Abelian and Tauberian theorems, Z. Wahrsch. Verw. Gebiete {\bf{22}} (1972), 251-267.‏

\end{thebibliography}
\end{document}